
\documentclass[11pt]{amsart}
\usepackage[utf8]{inputenc}
\usepackage{amssymb,url,xspace}
\usepackage[all]{xy}
\usepackage{hyperref}

\numberwithin{equation}{section}

\newtheorem*{theo}{Theorem}
\swapnumbers

\theoremstyle{plain}
\newtheorem{prp}[subsection]{Proposition}
\newtheorem{lem}[subsection]{Lemma}

\theoremstyle{definition}
\newtheorem{defn}[subsection]{Definition}

\newtheorem{rems}[subsection]{Remarks}

\newcommand{\ie}{\emph{i.e.}\@\xspace}
\newcommand{\df}{\colon}
\newcommand{\ra}{\rightarrow}
\newcommand{\xra}[1]{\xrightarrow{#1}}

\newdir{ >}{{}*!/-5pt/\dir{>}} 
\newcommand{\id}{1\kern -.35em 1}

\newcommand{\lmd}[1]{#1\text{-}\operatorname{mod}}

\newcommand{\CC}{\mathbb{C}}
\newcommand{\NN}{\mathbb{N}}
\newcommand{\QQ}{\mathbb{Q}}
\newcommand{\ZZ}{\mathbb{Z}}  

\newcommand{\cC}{\mathcal{C}}

\newcommand{\sfA}{\mathsf{A}}

\newcommand{\bfe}{\mathbf{e}}
\newcommand{\bff}{\mathbf{f}}
\newcommand{\bfg}{\mathbf{g}}
\newcommand{\bfm}{\mathbf{m}}

\newcommand{\ux}{\underline{x}}
\newcommand{\uy}{\underline{y}}
\newcommand{\uhy}{\underline{\hat{y}}}

\newcommand{\Lam}{\Lambda}
\newcommand{\Sig}{\Sigma}
\newcommand{\alp}{\alpha}
\newcommand{\bet}{\beta}
\newcommand{\gam}{\gamma}
\newcommand{\iot}{\iota}

\newcommand{\vph}{\varphi}

\newcommand{\dimv}{\underline{\dim}}

\newcommand{\add}{\operatorname{add}}
\newcommand{\End}{\operatorname{End}}
\newcommand{\Ext}{\operatorname{Ext}}
\newcommand{\Gr}{\operatorname{Gr}}
\newcommand{\Hom}{\operatorname{Hom}}
\newcommand{\ind}{\operatorname{ind}}
\newcommand{\Bi}{\operatorname{Im}}
\newcommand{\Ker}{\operatorname{Ker}}
\newcommand{\Obj}{\operatorname{Obj}}
\newcommand{\rad}{\operatorname{rad}}

\newcommand{\coker}{\operatorname{Coker}}
\newcommand{\coind}{\operatorname{coind}}

\newcommand{\ka}{k}

\title[Caldero-Chapoton Formula]%
{A Caldero-Chapoton Formula for generalized Cluster Categories}
\subjclass[2000]{13F60, 18E30}
\author{Salomón Domínguez}
\email{salomon@matem.unam.mx}
\author{Christof Geiss}
\email{christof@matem.unam.mx}
\urladdr{http://www.matem.unam.mx/christof}
\address{Instituto de Matemáticas, Universidad Nacional Autónoma de México,
Ciudad Universitaria, C.P. 04510, México D.F., MEXICO}
\date{November 6, 2013}

\begin{document}
\maketitle

\section{Introduction}
The aim of this note is to extend the range of a formula due to 
Caldero and Chapoton~\cite[Prp.~3.10]{CalCha06} from module categories 
over hereditary algebras of finite type to triangulated 2-Calabi-Yau categories
with a cluster tilting object. In view of the work of Palu~\cite{Palu08} 
it appeared natural to expect such a formula in this context. However,
the proof of the original statement does not carry over directly. We had
to reorganize the proof of the key statement in our Lemma~\ref{Lem:fibers}
and to review the relevant results from~\cite{Palu08}.

\begin{theo} Let $\cC$ be a triangulated 2-Calabi-Yau $\CC$-category
with suspension functor $\Sig$ and basic cluster tilting object 
$T=T_1\oplus\cdots\oplus T_n$. 
For an Auslander-Reiten triangle~\cite{Happel91}
$\Sig Z\ra Y\ra Z\ra \Sig^2 Z$ in $\cC$ we have
\[
C^T_{\Sig Z}\cdot C^T_Z= C^T_Y+1,
\]
where $C^T_?\df\Obj(\cC)\ra \ZZ[x_1^\pm,\ldots,x_n^\pm]$ denotes 
Palu's cluster character~\cite[Sec.~1]{Palu08}.
\end{theo}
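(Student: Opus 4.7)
The plan is to expand both sides of the identity using Palu's explicit expression for $C^T_?$ and thereby reduce the claim to one Grassmannian identity of Euler characteristics. Write $\Lam \df \End_\cC(T)^{\mathrm{op}}$, so that $F_T \df \Hom_\cC(T,-) \df \cC \ra \lmd\Lam$, and recall that Palu's formula reads
\[
C^T_M \;=\; x^{\ind_T M}\sum_{e} \chi(\Gr_e F_T M)\, x^{\delta(e)},
\]
with $\ind_T M\in \ZZ^n$ the index of $M$ relative to $T$ and $\delta(e)_i = \ebrace{S_i,e}_a - \ebrace{e,S_i}_a$ given by the antisymmetrised Euler form on $\lmd\Lam$.

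I would first expand $C^T_{\Sig Z}\cdot C^T_Z$ as a double sum indexed by submodule dimension vectors $e$ for $F_T\Sig Z$ and $f$ for $F_T Z$, and then match the resulting monomials to those appearing in $C^T_Y$. The additivity $\delta(e)+\delta(f)=\delta(e+f)$ is immediate, and the prefactor identity $x^{\ind_T\Sig Z+\ind_T Z} = x^{\ind_T Y}$ follows from Palu's formula for the behaviour of the index under distinguished triangles applied to $\Sig Z\ra Y\ra Z\ra \Sig^2 Z$, together with 2-Calabi-Yau duality to control the boundary contribution $\Hom_\cC(T,\Sig^2 Z)$. A direct evaluation of Palu's index at the full object shows that one degenerate sector contributes precisely the constant monomial $1$ in the expansion.

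After this monomial bookkeeping, the theorem reduces to the geometric identity
\[
\sum_{e+f=g}\chi(\Gr_e F_T\Sig Z)\,\chi(\Gr_f F_T Z) \;=\; \chi(\Gr_g F_T Y) + \eps_g,
\]
where $\eps_g\in\{0,1\}$ is supported on a single distinguished dimension vector; this is precisely the content of the Lemma~\ref{Lem:fibers} flagged in the introduction. To prove it I would introduce the variety of compatible pairs $(A\subseteq F_T\Sig Z,\,B\subseteq F_T Z)$ of total dimension $g$ that glue to a submodule $L\subseteq F_T Y$ via the long exact sequence coming from $\Hom_\cC(T,-)$ applied to the AR triangle, and analyse the natural projection onto $\Gr_g F_T Y$. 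I expect the fibres to be affine cells (hence of Euler characteristic $1$) everywhere except over one "bad" submodule, which accounts for the extra~$1$.

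The main obstacle is exactly this last fibre analysis. Unlike in the hereditary Caldero-Chapoton setting, $F_T$ is only half-exact on triangles, so the connecting map into $\Hom_\cC(T,\Sig^2 Z)$ intrudes into the argument and one cannot simply copy the affine-bundle calculation of \cite{CalCha06}. The key categorical input should be 2-Calabi-Yau duality, which makes $\Ext^1_\cC(Z,\Sig Z)$ one-dimensional and spanned by the AR triangle itself; one must then translate this categorical statement into one about $\Ext^1_\Lam$ at the level of submodules in order to isolate the exceptional fibre. Controlling the discrepancy between $\Ext^1_\cC$ and $\Ext^1_\Lam$ at this point is the reorganisation of the original argument alluded to in the introduction, and it is exactly what forces one fibre to contribute an extra $1$ rather than to cancel to $0$.
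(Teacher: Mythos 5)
Your overall architecture --- expand both characters via Palu's formula, use index additivity for the prefactor, and reduce everything to a fibre analysis relating $\Gr_\bfg$ of the middle term to pairs of submodules of the outer terms --- is the same as the paper's. But the step you single out as the ``key categorical input'' is false, and it is precisely the point the paper has to repair. Serre duality identifies $\Ext^1_\cC(Z,\Sig Z)\cong\Hom_\cC(Z,\Sig^2 Z)$ with the dual of $\Hom_\cC(Z,Z)$, so this space is one-dimensional only when $Z$ is a brick; the paper explicitly notes that when $\dim\cC(Z,\Sig^2 Z)=1=\dim\cC(Z,Z)$ the theorem already follows from Palu's multiplication formula, and that one \emph{cannot} assume $\dim\Ext^1_\Lam(M,\tau M)=1$ in general. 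The exceptional-fibre analysis therefore cannot be run off ``the AR triangle spans the $\Ext^1$-space''. What replaces it (step~(3) of the proof of Lemma~\ref{Lem:fibers}) is the defining lifting property of an almost split sequence: for $0\neq A\subseteq L$ the projection $\bet\df L\ra L/A$ is not a section, hence the class $\Phi$ of the AR sequence lies in $\Ker(\Ext^1_\Lam(N,\bet))=\Bi(\Ext^1_\Lam(N,\alp))$, which produces the pullback diagram needed to identify the fibre over $(A,N)$ with the affine space $\Hom_\Lam(N,L/A)$. Note also that the extra $1$ comes from the \emph{empty} fibre over the single point $(0,N)$ of the product of Grassmannians (no submodule of the middle term surjects onto $N$ while meeting $L$ trivially, because the sequence does not split); your projection onto $\Gr_\bfg F_TY$ points in the wrong direction, and a ``bad submodule'' of $F_TY$ would alter $\chi(\Gr_\bfg F_TY)$ rather than contribute the additive $+1$.

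A second gap: you never treat the degenerate AR triangles. When $Z\in\add(T)$ or $Z\in\add(\Sig^{-1}T)$ (in the paper's normalization $E=\cC(T,\Sig-)$), applying the functor to the AR triangle does not yield an AR sequence in $\lmd{\Lam}$ at all: one obtains $EY\cong\rad P_i\subset P_i\cong EZ$, respectively $E\Sig Z\cong I_j$ with $EY\cong I_j/S_j$, and these require the separate identities $F_{P_i}=F_{\rad P_i}+\uy^{\dimv P_i}$ and $F_{I_j}=y_j\,F_{I_j/S_j}+1$. Moreover the prefactor identity $\ind_T(Y)=\ind_T(\Sig Z)+\ind_T(Z)$ that you invoke fails when $Z\cong T_j$: Palu's formula for the index along a triangle carries the correction term $B_T\cdot\dimv_\Lam\Ker(E\alp)$, which vanishes only when $E\alp$ is injective, and in this case one gets $\ind_T(Y)=B_T\cdot\dimv_\Lam S_j$ instead. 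These cases must be argued separately for the computation to close.
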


Notice that in case $\dim\cC(Z,\Sig^2 Z)=1=\dim\cC(Z,Z)$ our result follows from
Palu's multiplication formula~\cite{Palu12}, otherwise our result looks rather
surprising in view of the complexity of this multiplication formula.
\medskip

Our formula reduces in many cases the complexity of explicit cluster character
computations: let $Z$ be an (indecomposable) object in an Auslander-Reiten 
component which is of type $\ZZ\sfA_\infty$ or a tube in a cluster category.
Based on the formula of Caldero and Chapoton, Dupont pointed out how to express
$X^T_Z$ in terms of certain Chebyshev polynomials which take as arguments
the values of $X^T_?$ on objects in the ``mouth'' of the corresponding 
component in case $\Lam:=\End_\cC(T)^{\text{op}}$, see for 
example~\cite[Thm.~5.1]{Dupont12}. Our result shows, that formulas of this kind 
hold without any restriction on $\cC$ or $T$.

\subsection{Preliminaries}
Let $\ka$ be an algebraically closed field, and $\cC$ be a triangulated 
2-Calabi-Yau $\ka$-category with suspension functor
$\Sig$. This  means that $\cC$ admits a Serre functor which we may
identify for our purpose with $\Sig^2$. Thus, at each indecomposable object
$Z\in\cC$ there ends and Auslander-Reiten triangle, and the Auslander-Reiten
translate $\tau$ may be identified with $\Sig$, see for 
example~\cite[I.2]{ReiVdB02}.

Suppose that $\cC$ has a basic cluster tilting object
$T=T_1\oplus\cdots\oplus T_n$ and set $\Lam:=\End_\cC(T)^{\text{op}}$.
We consider the functor $E:=\cC(T,\Sig -)\df\cC\ra\lmd{\Lam}$.
Recall that $E$ induces an equivalence of $\ka$-categories
$\cC/(\add(T))\xra{\sim}\lmd{\Lam}$, see~\cite[Prp.~2.1]{KelRei07}.
Moreover, each exact sequence in $\lmd{\Lam}$ can be ``lifted'' to 
an distinguished triangle in $\cC$ by~\cite[Lemma~3.1]{Palu08}.

For each $X\in\cC$ there exists an distinguished triangle 
\[
\oplus_{i=1}^n T_i^{m(i,X)}\ra \oplus_{i=1}^n T_i^{p(i,X)}\ra X\ra
\Sig \left(\oplus_{i=1}^n T_i^{m(i,X)}\right)
\]
and $\ind_T(X):=(p(i,X)-m(i,X))_{i=1,\ldots,n}\in\ZZ^n$ is well-defined,
see~\cite[Sec.~2.1]{Palu08}.

Following Palu~\cite{Palu08}, we have in  case $\ka=\CC$ a cluster character
\[
C^T_?\df \Obj(\cC)\ra \QQ[x_1^\pm,\ldots, x_n^\pm],\ 
X\mapsto \ux^{\ind_T(X)}\sum_{\bfe} \chi(\Gr_\bfe(EX))\ux^{B_T\cdot\bfe},
\]
which naturally extends the expression introduced by Caldero and Chapoton
in~\cite[Sec.~3.1]{CalCha06}. 
Here, $\Gr_\bfe(EX)$ denotes the quiver grassmanian of $\Lam$-submodules of
$EX$ with dimension vector $\bfe$ and $\chi$ is the topological Euler 
characteristic. Finally, $B_T\in\ZZ^{n\times n}$ is the matrix with entries
\[
(B_T)_{i,j}:= \dim \Ext^1_\Lam(S_i,S_j) - \dim \Ext^1_\Lam(S_j,S_i),
\]
where $S_i:=E\Sig^{-1}T_i/\rad E\Sig^{-1}T_i$ for $i=1,\ldots,n$ represents
the simple $\Lam$-modules. Observe, that this is indeed Palu's cluster
character pre-composed  with the suspension $\Sig$.

\subsection{Example}
Let $\Lam$ be a finite dimensional basic $\CC$-algebra as above. Set
\begin{multline*}
\bfg_M:=(\dim\Ext^1_\Lam(S_i,M)-\dim\Hom_\Lam(S_i,M))_{i=1,\ldots,n}\\
\text{and}\quad C'_M:=\ux^{\bfg_M}\cdot F_M(\uhy) \ \text{ for } M\in\lmd{\Lam}, 
\end{multline*}
see Definition~\ref{def:F-pol} and Section~\ref{ssec:concl-pf} below
for the missing definitions. 
For $\bfm=(m_1,\ldots,m_n)\in\NN_0^n$ set 
$T^\bfm:=T_1^{m_1}\oplus\cdots\oplus T_n^{m_n}$. If $Z=Z'\oplus T^\bfm$ with
$Z'$ having no direct summand in $\add(T)$ we see that 
$C^T_Z=C'_{EZ}\cdot \ux^\bfm$. 

Let $0\ra\tau M\ra N\ra M\ra 0$ be an Auslander-Reiten sequence in $\lmd{\Lam}$,
and $Z\in\cC$ indecomposable with $EZ=M$.
Then, for the Auslander-Reiten triangle $\Sig Z\ra Y\ra Z\ra\Sig^2Z$ in 
$\cC$ we have $EY=N$ and $E\Sig Z=\tau M$.
Thus, if $Y$ has a non-trivial direct summand from $\add(T)$, we do \emph{not}
have $X'_{\tau M} X'_M=X'_N+1$. An example of this behaviour can be found
for $T$ being a non-acyclic cluster tilting object in a cluster category of
type $\mathsf{A}_3$ and $M$ a simple $\Lam=\End_\cC(T)^{\text{op}}$-module.

\section{Proof of the Theorem}
The following key result and the strategy for its proof are essentially
due to Caldero and Chapoton~\cite[Lemma~3.11]{CalCha06}. We review 
their proof, since we need the result for arbitrary finite dimensional
$\ka$-algebras, rather than only for hereditary algebras of finite type.
The only digression from their argument is at the beginning of step~(3) 
of the proof below, since we cannot assume in general that
$\dim\Ext^1_\Lam(M,\tau M)=1$ for an indecomposable module $M$.

\begin{lem} \label{Lem:fibers}
Let $\Lam$ be a finite dimensional basic $\ka$-algebra   and
$0\ra L\xra{\iot} M\xra{\pi} N\ra 0$ an Auslander-Reiten sequence in 
$\lmd{\Lam}$.
Consider the morphism of projective varieties
\[
\xi_\bfg\df \Gr_\bfg(M) \ra \coprod_{\bfe+\bff=\bfg} \Gr_\bfe(L)\times\Gr_\bff(N),
U\mapsto (\iot^{-1}(U),\pi(U)).
\]
Then the fiber $\xi_\bfg^{-1}(A,C)$ is an affine space isomorphic to 
$\Hom_\Lam(C, L/A)$ if $(A,C)\neq (0,N)$, and $\xi_\bfg^{-1}(0,N)=\emptyset$.
\end{lem}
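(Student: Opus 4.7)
The plan is to first rule out the fiber over $(0,N)$, and then to identify each remaining fiber with $\Hom_\Lam(C,L/A)$ by exploiting the almost split property of the sequence in two dual directions. For the empty fiber, suppose a submodule $U\subseteq M$ satisfied $\iot^{-1}(U)=0$ and $\pi(U)=N$; then $\pi|_U$ would be both injective and surjective, producing a section of $\pi$ and splitting the Auslander-Reiten sequence, which is impossible.

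For a pair $(A,C)\neq(0,N)$ I would proceed by two (overlapping) cases. If $C\subsetneq N$, the inclusion $C\hookrightarrow N$ is not a split epimorphism (recall $N$ is indecomposable), so the defining property of the almost split sequence produces a lift $s\df C\ra M$ with $\pi\circ s$ equal to the inclusion. Hence the pullback $M':=\pi^{-1}(C)$ decomposes as $\iot(L)\oplus s(C)$, and any $U$ in the fiber is necessarily contained in $M'$. Such a $U$ can then be written uniquely as $\iot(A)+\{\iot(\tilde\varphi(c))+s(c):c\in C\}$ for some $\Lam$-linear lift $\tilde\varphi\df C\ra L$ of a map $\varphi\df C\ra L/A$; two lifts yield the same $U$ precisely when they agree modulo $A$. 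This gives a bijection between the fiber and $\Hom_\Lam(C,L/A)$. Dually, if $A\neq 0$, the projection $L\ra L/A$ is not a split monomorphism (recall $L$ is indecomposable), so the almost split property provides a map $r\df M\ra L/A$ extending it; equivalently, the pushout sequence $0\ra L/A\ra M/\iot(A)\ra N\ra 0$ splits, and submodules $U\supseteq\iot(A)$ with $\pi(U)=N$ and $U\cap\iot(L)=\iot(A)$ are in bijection with its splittings, a torsor under $\Hom_\Lam(N,L/A)=\Hom_\Lam(C,L/A)$. Together these two cases cover every $(A,C)\neq(0,N)$, and the resulting bijections agree on the overlap where $A\neq 0$ and $C\subsetneq N$.

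It then remains to upgrade these set-theoretic bijections to isomorphisms of varieties. In either case the parametrization is visibly algebraic in the lift $\tilde\varphi$ (respectively in the splitting), so the map $\Hom_\Lam(C,L/A)\ra\Gr_\bfg(M)$ is a morphism of varieties landing in the locally closed subset cut out by $\iot^{-1}(U)=A$ and $\pi(U)=C$; the inverse is obtained by intersecting $U$ with the fixed decomposition of $M'$ (respectively $M/\iot(A)$) and reading off the component in $L/A$. I expect the main obstacle to be bookkeeping rather than conceptual: one must check that the lifts $s$ and $\tilde\varphi$ can be chosen so that the two cases glue consistently on their overlap and that the identification is genuinely $\ka$-linear, thereby producing the claimed affine-space structure on each non-empty fiber.
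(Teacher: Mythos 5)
Your proof follows the same architecture as the paper's: the fiber over $(0,N)$ is empty because a submodule of $M$ mapping isomorphically onto $N$ would give a section of $\pi$; for $C\subsetneq N$ the right almost split property of $\pi$ lifts the (non-split-epi) inclusion $C\hookrightarrow N$ to $s\df C\ra M$, so $\pi^{-1}(C)=\iot(L)\oplus s(C)$ and one is reduced to the split situation; and for $A\neq 0$, $C=N$ the left almost split property of $\iot$ applied to the non-mono $\bet\df L\ra L/A$ is what makes the fiber nonempty. Your packaging of the last case --- the pushout $0\ra L/A\ra M/\iot(A)\ra N\ra 0$ splits, and the fiber is the torsor of its splittings under $\Hom_\Lam(N,L/A)$ --- is correct and is an equivalent, slightly cleaner form of the paper's construction of a class $\Psi\in\Ext^1_\Lam(N,A)$ with $\Ext^1_\Lam(N,\alp)(\Psi)=\Phi$ followed by an explicit formula. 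The worry about the two cases ``gluing'' is beside the point: each fiber is treated separately, and the application only needs every fiber to be an affine space.

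There is, however, one genuine error in the case $C\subsetneq N$. You assert that every $U$ in the fiber equals $\iot(A)+\{\iot(\tilde\varphi(c))+s(c): c\in C\}$ for some lift $\tilde\varphi\df C\ra L$ of a map $\varphi\df C\ra L/A$. But $\Hom_\Lam(C,L)\ra\Hom_\Lam(C,L/A)$ need not be surjective, so such a lift need not exist: the submodules of this shape are exactly those $U$ for which the induced extension $0\ra A\ra U\ra C\ra 0$ splits, so your parametrization hits only part of the fiber and identifies that part with the image of $\Hom_\Lam(C,L)$ in $\Hom_\Lam(C,L/A)$ rather than with all of $\Hom_\Lam(C,L/A)$. (For instance, inside $\Lam\oplus S$ with $\Lam=\ka[x]/(x^2)$, $S$ the simple module, $A=\rad\Lam$ and $C=S$, the diagonal copy of $\Lam$ lies over $(A,C)$ but is not of your form, since every map $S\ra\Lam$ lands in $A$.) The repair is to drop the lift and parametrize directly by $\varphi\in\Hom_\Lam(C,L/A)$ via $U_\varphi=\{\iot(l)+s(c)\mid \bet(l)=\varphi(c)\}$ with $\bet\df L\ra L/A$ the projection; the inverse sends $U$ to the map $c\mapsto\bet(l)$, where $\iot(l)+s(c)$ is any element of $U$ lying over $c$. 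This is precisely the shape of the formula the paper writes out in its step~(3), and of the split-case statement it cites from Caldero--Chapoton.
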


\begin{proof}
(1) Suppose first we had $B\in\xi^{-1}(0,N)$. Then $B\cong N$ and we would 
obtain a section to $\pi$, a contradiction. Thus $\xi_\bfg^{-1}(0,N)=0$.

(2) Now, let $C\subset N$ be a proper submodule and denote
by $\gam\df C\hookrightarrow N$ the inclusion. We obtain the following
commutative diagram with exact rows:
\[\xymatrix{
0\ar[r]& L\ar@{=}[d]\ar[r]^{\iot'}& M'\ar[d]_{\gam'}\ar[r]^{\pi'} & 
C\ar[d]^\gam\ar[r]&0\\
0\ar[r]& L\ar[r]_{\iot} & M\ar[r]_\pi &N\ar[r] &0
}\]
The first row splits since $\gam$ factors over $\pi$.
Since $\gam'$ is injective by the snake lemma, it induces an isomorphism
between
\[
\Gr_{A,C}(M'):= \{ B\in\Gr_\bfg(M')\mid {\iot'}^{-1}(B)=A\text{ and } \pi'(B)=C\}
\]
and $\xi_\bfg^{-1}(A,C)$. Thus, in this case we are reduced to the corresponding
statement in the easy case of a split exact sequence, see for 
example~\cite[Lemma~3.8]{CalCha06}. Note, that the proof of this Lemma
does not depend on any particular property of $\Lam$.

(3) Finally, let $C=N$ and $A\neq 0$. Denote by 
$\alp\df A\hookrightarrow L$ the inclusion, and by 
$\Phi:=[0\ra L\ra M\ra N\ra 0]\in\Ext^1_\Lam(N,L)\setminus\{0\}$ the
class of our Auslander-Reiten sequence. We claim that there exists 
$\Psi\in\Ext^1_\Lam(N,A)$ with $\Ext^1_\Lam(N,\alp)(\Psi)=\Phi$, \ie
we have a commutative diagram
\[\xymatrix{
0 \ar[r] &A \ar[d]_\alp\ar[r] & M'\ar[d]_{\alp'}\ar[r]^\nu 
&N\ar@{=}[d]\ar[r] & 0\\
0 \ar[r] &L \ar[r]_\iot& M\ar[r]_\pi & N\ar[r] & 0
}\]
with exact (non-split) rows. The class of the top row is then $\Psi\neq 0$.
Indeed, consider the short exact sequence 
$0\ra A\xra{\alp} L\xra{\bet} L/A\ra 0$;
since $\Phi$ is an almost split sequence  and the projection $\bet\df L\ra L/A$
is not a section, we have
\[
\Phi\in\Ker(\Ext^1_\Lam(N,\bet))=\Bi(\Ext^1_\Lam(N,\alp)).
\]
Note that $\alp'$ is injective by the snake lemma. In particular, 
$\Bi(\alp')\in\xi^{-1}_\bfg(A,N)$ for $\bfg=\dimv A+\dimv N$. Now we can argue 
as~\cite{CalCha06} in the end of the proof of  Lemma~3.11, where the authors 
show that
\begin{multline*} 
\Hom_A(N,L/A)\ra \xi^{-1}_\bfg(A,N), \\
\vph\mapsto\{\iot(l)+\alp'(m)\mid (l,m)\in L\times M', \bet(l)=\vph\nu(m)\}
\end{multline*}
is the required bijection.  Note, that this elementary argument does not
require $\Lam$ to be hereditary or of finite representation type.
\end{proof}

\begin{defn} \label{def:F-pol}
Let $\Lam$ be a finite dimensional basic $\CC$-algebra.
For a $\Lam$-module $M$ we define the $F$-\emph{polynomial} to be
the generating function for the Euler characteristic of all possible
quiver grassmanians, \ie
\[
F_M:=\sum_\bfe \chi(\Gr_\bfe(M)) \uy^\bfe\in\ZZ[y_1,\ldots,y_n]
\]
where the sum runs over all possible dimension vectors of submodules of
$M$. Moreover, we assume that $S_1,\ldots, S_n$ is a complete system of
representatives of the simple $\Lam$-modules, and we identify the classes
$[S_i]\in K_0(\Lam)$ with the natural basis of $\ZZ^n$. 
\end{defn}

\begin{prp} \label{prp:AR-F}
Let $\Lam$ be a finite dimensional basic $\CC$-algebra.
Then the following holds:
\begin{itemize}
\item[(a)] 
If $0\ra L\xra{\iot} M\xra{\pi} N\ra 0$ is an Auslander-Reiten sequence in 
$\lmd{\Lam}$, then
\[
F_L\cdot F_N= F_M+ \uy^{\dimv N}.
\]
\item[(b)]
For the indecomposable projective $\Lam$-module $P_i$ with
top $S_i$ we have
\[
F_{P_i} = F_{\rad P_i} + \uy^{\dimv P_i}
\]
for  $i=1,\ldots, n$.
\item[(c)]
For the indecomposable injective $\Lam$-module $I_j$ module with socle $S_j$
we have
\[
F_{I_j}= y_j\cdot F_{I_j/S_j} +1
\]
for $j=1,2,\ldots, n$.
\end{itemize}
\end{prp}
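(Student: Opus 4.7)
The plan is to deduce each of the three identities from a combinatorial description of the relevant quiver Grassmannians, pushed forward to Euler characteristics by standard fibration-type arguments.

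For part~(a), the geometric input is exactly Lemma~\ref{Lem:fibers}: fixing a dimension vector $\bfg$, every non-empty fibre of the morphism $\xi_\bfg\df\Gr_\bfg(M)\ra \coprod_{\bfe+\bff=\bfg}\Gr_\bfe(L)\times\Gr_\bff(N)$ is an affine space, hence has Euler characteristic $1$, while the single empty fibre lies over $(0,N)$ (which belongs to the codomain only when $\bfg=\dimv N$, where $\Gr_{\dimv N}(N)=\{N\}$ is a reduced point). The standard multiplicativity of topological Euler characteristic for constructible maps with constant non-zero fibre characteristic then yields
\[
\chi(\Gr_\bfg(M))=\sum_{\bfe+\bff=\bfg}\chi(\Gr_\bfe(L))\,\chi(\Gr_\bff(N))-\delta_{\bfg,\dimv N},
\]
and multiplying by $\uy^\bfg$ and summing over $\bfg$ gives the claimed identity $F_L\cdot F_N=F_M+\uy^{\dimv N}$.

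For part~(b), I would invoke Nakayama's lemma: since $P_i/\rad P_i\cong S_i$ is simple, $\rad P_i$ is the unique maximal submodule of $P_i$, so any submodule $U\subseteq P_i$ with $U+\rad P_i=P_i$ already equals $P_i$. Consequently every proper submodule of $P_i$ is contained in $\rad P_i$, so $\Gr_\bfg(P_i)=\Gr_\bfg(\rad P_i)$ whenever $\bfg\neq\dimv P_i$, whereas $\Gr_{\dimv P_i}(P_i)=\{P_i\}$ is a reduced point. Assembling the generating functions produces $F_{P_i}=F_{\rad P_i}+\uy^{\dimv P_i}$. For part~(c), the simple socle $S_j$ of the indecomposable injective $I_j$ is essential (since $I_j$ is its injective envelope), so every non-zero submodule of $I_j$ contains $S_j$. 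Therefore the quotient map induces an isomorphism of varieties $\Gr_\bfg(I_j)\xra{\sim}\Gr_{\bfg-\dimv S_j}(I_j/S_j)$ for $\bfg\neq 0$, and the zero submodule contributes an isolated $1$. Re-assembling the generating function gives $F_{I_j}=1+y_j F_{I_j/S_j}$.

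The only step that is not a one-line observation is~(a), and there the only point requiring care is the invocation of multiplicativity of topological Euler characteristic along the constructible morphism $\xi_\bfg$; Lemma~\ref{Lem:fibers} has already supplied the explicit fibre description needed for that invocation, and parts~(b) and~(c) reduce to elementary module-theoretic bookkeeping.
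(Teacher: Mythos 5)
Your proof is correct and takes essentially the same route as the paper: part (a) is exactly the combination of Lemma~\ref{Lem:fibers} with the standard multiplicativity of the topological Euler characteristic for constructible maps with fibres of constant Euler characteristic (the paper cites \cite[7.4]{GeLeSc07} for this), and parts (b) and (c) are the elementary module-theoretic observations that the paper dismisses as ``easy''. You have simply written out the details the authors leave implicit.
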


\begin{proof}
(a) In view of the well-known properties of Euler characteristics, see for
example~\cite[7.4]{GeLeSc07}, this is a direct consequence of 
Lemma~\ref{Lem:fibers} and the definition of F-polynomials.

Statements (b) and (c) are easy.
\end{proof}

\begin{rems} \label{rem:AR-TS}
Let $\cC$ be a triangulated $2$-Calabi-Yau category with cluster tilting
object $T$ and $\Lam:=\End_\Lam(T)^{\text{op}}$ as in the introduction.
For an  Auslander-Reiten triangle 
$\Sig Z\xra{\alp} Y\xra{\bet} Z\xra{\gam}\Sig^2 Z$  
in $\cC$, clearly $Z$ is  indecomposable. If we apply the functor
$E:=\cC(T,\Sig-)$ the following three cases can occur:
\begin{itemize}
\item[(a)] 
If $Z\not\in\add(T\oplus\Sig^{-1}T)$ the sequence
\[
0\ra E\Sig Z\xra{E\alp} E Y\xra{E\bet} E Z\ra 0
\]
is an Auslander-Reiten sequence in $\lmd{\Lam}$.
\item[(b)]
If $Z\cong \Sig^{-1} T_i$ for some $i$, then 
\[
E Y\cong \rad P_i,\quad E Z\cong P_i \quad (\text{and } E\Sig Z=0).
\]
\item[(c)] 
If $Z\cong T_j$ for some $j$, then
\[
E \Sig Z\cong I_j,\quad E Y\cong I_j/S_j \quad (\text{and } EZ=0).
\]
\end{itemize}
In particular, always one of the situations of Proposition~\ref{prp:AR-F}
applies. We leave the easy proof as an exercise.
\end{rems}

\begin{prp} 
Let $\cC$ be a triangulated $2$-Calabi-Yau $\ka$-category with a basic cluster
tilting object $T=T_1\oplus\cdots\oplus T_n$.
\begin{itemize}
\item[(a)]
For each object $Z\in\cC$ we have
\begin{equation}
-B_T\cdot \dimv_\Lam(E Z) = \ind_T(\Sig Z) +\ind_T(Z) \label{eq:ind1}.
\end{equation}
\item[(b)]
If $X\xra{\alp} Y\xra{\bet} Z\xra{\gam}\Sig X$ is a distinguished triangle in
$\cC$, then
\begin{equation}
\ind_T(Y)=\ind_T(X)+\ind_T(Z)+ B_T\cdot \dimv_\Lam(\Ker(E\alp)).
\end{equation}
\item[(c)]
If
$\Sig Z\xra{\alp} Y\xra{\bet} Z\xra{\gam} \Sig^2 Z$ is an 
Auslander-Reiten triangle then moreover
\begin{equation}
\ind_T(Y) =
\begin{cases} \ind_T(\Sig Z) +\ind_T(Z) &\text{if } Z\not\in\add(T),\\
B_T\cdot\dimv_\Lam S_i &\text{if } Z\cong T_i
\end{cases} \label{eq:ind2}
\end{equation}
holds.
\end{itemize}
\end{prp}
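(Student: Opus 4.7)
The plan is to establish the three parts in order, with (a) and (b) essentially reproducing results of Palu~\cite{Palu08}, and (c) a direct application of (b) combined with the case analysis of Remarks~\ref{rem:AR-TS}.

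For part (a), the identity is Palu's index-coindex formula~\cite[Prp.~2.2]{Palu08} transcribed into our conventions: one only needs to observe $-\ind_T(\Sig Z)=\coind_T(Z)$. Indeed, if $T^{\bfm'}\ra T^{\mathbf{p}'}\ra\Sig Z\ra\Sig T^{\bfm'}$ realizes $\ind_T(\Sig Z)=\mathbf{p}'-\bfm'$, the rotated triangle $Z\ra T^{\bfm'}\ra T^{\mathbf{p}'}\ra\Sig Z$ witnesses $\coind_T(Z)=\bfm'-\mathbf{p}'=-\ind_T(\Sig Z)$.

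For part (b), the natural approach is the octahedral axiom: the defining triangles for $\ind_T(X)$ and $\ind_T(Z)$, combined with $X\xra{\alp}Y\xra{\bet}Z\xra{\gam}\Sig X$ through two applications of the octahedron, yield a presentation of $Y$ over $\add(T)$. The discrepancy between $\ind_T(Y)$ and $\ind_T(X)+\ind_T(Z)$ is governed by the long exact sequence $\cdots\to EX\xra{E\alp}EY\to EZ\to\cdots$, and is identified---using part~(a) to convert between indices and dimension vectors---with $B_T\cdot\dimv_\Lam\Ker(E\alp)$. This is essentially~\cite[Prp.~2.1]{Palu08}.

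For part (c), apply (b) to the Auslander-Reiten triangle $\Sig Z\xra{\alp}Y\ra Z\ra\Sig^2 Z$ using the trichotomy of Remarks~\ref{rem:AR-TS}. In cases (a) and (b) there, $E\alp$ is injective---either as the leftmost monomorphism of an AR sequence in $\lmd\Lam$, or trivially because $E\Sig Z=0$---so $\Ker(E\alp)=0$ and (b) gives $\ind_T(Y)=\ind_T(\Sig Z)+\ind_T(Z)$. In case (c), $Z\cong T_j$, and the relevant portion $\cC(T,Z)=P_j\to E\Sig Z=I_j\xra{E\alp}EY=I_j/S_j\to EZ=0$ of the long exact sequence forces $E\alp$ to be the canonical projection with $\Ker(E\alp)=S_j$. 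The trivial defining triangle $0\ra T_j\ra T_j\ra 0$ gives $\ind_T(T_j)=\dimv_\Lam S_j$, while part (a) combined with $ET_j=0$ gives $\ind_T(\Sig T_j)=-\dimv_\Lam S_j$; substituting into (b) these contributions cancel, leaving $\ind_T(Y)=B_T\cdot\dimv_\Lam S_j$ as required. The main obstacle is the translation work in (a) and (b); once those are settled, (c) reduces to the elementary case analysis above.
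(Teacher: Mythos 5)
Your proposal is correct in outline and takes essentially the same route as the paper: part (a) is delegated to Palu's index/coindex results, part (b) to his formula for the index in a triangle (Proposition~2.2 of \cite{Palu08}) converted via (a), and part (c) is the same case analysis through Remarks~\ref{rem:AR-TS}, including $\Ker(E\alp)\cong S_j$ and $-\ind_T(\Sig T_j)=\ind_T(T_j)=\bfe_j$ when $Z\cong T_j$. The one imprecision is in (a): the input needed from \cite{Palu08} is Lemma~2.3 together with Theorem~3.3 (which identifies the linear map $\ind_T-\coind_T$ with the matrix $B_T$ as defined here via $\Ext^1$ between the simples), not Proposition~2.2, and the useful half of the rotation identity is $\coind_T(\Sig Z)=-\ind_T(Z)$, so that the index--coindex formula applied to the object $\Sig Z$ directly produces $\dimv\,\cC(T,\Sig Z)=\dimv\,EZ$ rather than $\dimv\,\cC(T,Z)$.
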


\begin{proof}
(a)  We note first that we have
$\ind_T(\Sig Z)+\ind_T(Z)=\ind_T(\Sig Z)-\coind_T(\Sig Z)$
by~\cite[Lemma~2.1~(1)]{Palu08}.
Next, by~\cite[Lemma~2.3]{Palu08} and~\cite[Theorem~3.3]{Palu08}
together with our definition of $B_T$ we obtain
\[
\ind_T(\Sig Z)-\coind_T(\Sig Z)= B_T\cdot\dimv_\Lam EZ.
\]
Note that~\cite[Lemma~2.3]{Palu08} is stated for indecomposable objects
in $\cC$. However, for each indecomposable summand of $\Sig Z$ we obtain
the correct difference.

(b)  Choose $C\in\cC$ with $\cC(T,C)\cong\coker\cC(T,\bet)\cong \Ker(E\alp)$. 
By comparing with~\cite[Prop.~2.2]{Palu08} we have only to show 
that $\ind_T(C)+\ind_T(\Sig^{-1}C)= -B_T\cdot (\dimv_\Lam\cC(T,C))$. This
is exactly the statement of~(a) with $Z=\Sig^{-1} C$.

(c) Since we have an Auslander-Reiten triangle, a similar argument
as in Remark~\ref{rem:AR-TS} shows that $E\alp$ is injective
unless $Z\cong T_j$ for some $j=1,\ldots, n$. Thus, our claim follows
from~(b) if we note that $-\ind_T(\Sig T_j)=\ind_T(T_j)=\bfe_j$.  
\end{proof}

\subsection{Conclusion of the proof} \label{ssec:concl-pf}
Let $\hat{y}_j:=\prod_{i=1}^n x_i^{B_{i,j}}\in\ZZ[x_1^\pm,\ldots,x_n^\pm]$, 
and for $\bfe=(e_1,\ldots,e_n) \in\ZZ^n$ write 
$\uhy^\bfe:=\prod_{i=1}^n \hat{y}_i^{e_i}=\ux^{B_T\cdot\bfe}$. 
With this notation we have obviously
\begin{equation} \label{eq:C2}
C^T_X=\ux^{\ind_T(X)}F_{EX}(\uhy) \text{ for any } X\in\cC.
\end{equation}

Let first $Z\not\in\add(T)$, then we have
\begin{align*}
C^T_{\Sig Z}\cdot C^T_Z &= 
\ux^{\ind_T(\Sig Z)+\ind_T(Z)} F_{E\Sig Z}(\uhy)\cdot F_{EZ}(\uhy)\\
&=\ux^{\ind_T(\Sig Z)+\ind_T(Z)} (F_{EY}(\uhy)+\uhy^{\dimv_\Lam(E Z)})\\
&=\ux^{\ind_T(Y)}F_{EY}(\uhy) + 
\ux^{\ind_T(\Sig Z)+\ind_T(Z)+B_T\cdot \dimv_\Lam(EZ)}\\
&= C^T_Y+1.
\end{align*}
Here we used consecutively~\eqref{eq:C2}, Proposition~\ref{prp:AR-F}~(a) 
resp.~(b) and Remark~\ref{rem:AR-TS}~(a) resp.~(b), 
equation~\eqref{eq:ind2}, and finally~\eqref{eq:ind1}.

Finally, suppose that $Z\cong T_j$ for some $j$, \ie we consider the
Auslander-Reiten  triangle $\Sig T_j\ra Y \ra T_j\ra \Sig^2T_j$.
Then we have
\begin{align*}
C^T_{\Sig T_j}\cdot C^T_{T_j} &= x_j^{-1} F_{I_j}(\uhy)\, x_j\\
&= F_{I_j/S_j}(\uhy)\,\hat{y}_j+1\\
&= C^T_Y+1.
\end{align*}
Here, we used consecutively Remark~\ref{rem:AR-TS}, 
Proposition~\ref{prp:AR-F}~(c), and equation~\eqref{eq:ind2}.\qed

\subsection*{Acknowledgment} Both authors acknowledge partial support
from the CONACYT grant 81498.


\begin{thebibliography}{22}

\bibitem{CalCha06} P. Caldero, F. Chapoton: 
\emph{Cluster algebras as Hall algebras of quiver representations}, 
Comment. Math. Helv. \textbf{81} (2006), no. 3, 595-616.

\bibitem{Dupont12}
G. Dupont:  
\emph{Cluster Multiplication in Regular Components via Generalized 
Chebyshev Polynomials}, 
Algebr. Represent. Theory \textbf{15} (2012), no. 3, 527–549. 

\bibitem{GeLeSc07}
Ch. Geiss, B. Leclerc, J. Schröer: 
\emph{Semicanonical bases and preprojective algebras. II. 
A multiplication formula},
Compos. Math. \textbf{143} (2007), no. 5, 1313–1334.

\bibitem{Happel91} D. Happel: 
\emph{Auslander-Reiten triangles in derived categories of finite dimensional 
algebras}, 
Proc. Amer. Math. Soc. \textbf{112} (1991), No. 3, 641-648.

\bibitem{KelRei07} B. Keller; I. Reiten: 
\emph{Cluster-tilted algebras are Gorenstein and stably Calabi-Yau}, 
Adv. Math. \textbf{211} (2007), no. 1, 123-151.


\bibitem{Palu08} Y. Palu: 
\emph{Cluster characters for 2-Calabi-Yau triangulated categories}, 
Ann. Inst. Fourier (Grenoble) \textbf{58} (2008), No. 6, 2221-2248.

\bibitem{Palu12}
Y. Palu: 
\emph{Cluster characters II: a multiplication formula}, 
Proc. Lond. Math. Soc. (3) \textbf{104} (2012), no. 1, 57–78.

\bibitem{ReiVdB02} I. Reiten, M. Van den Bergh: 
\emph{Noetherian hereditary abelian categories satisfying Serre duality}, 
J. Amer. Math. Soc. \textbf{15} (2002), No. 2, 295-366.
\end{thebibliography}
\end{document}